\numberwithin{equation}{section}
\theoremstyle{plain}
 \newtheorem{theorem}{Theorem}[section]
 \newtheorem{lemma}[theorem]{Lemma}
\theoremstyle{definition}
 \newtheorem{definition}[theorem]{Definition}
 \newtheorem{notation}[theorem]{Notation}
\theoremstyle{remark}
\newcommand \mn [1] {#1^-}
\newcommand \vb {\vec \beta}
\newcommand \mb  {\vec \beta^{-}}
\newcommand \datum{September 5, 2016}
\newcommand \N {\mathbb N}
\newcommand \trn[2] {\{\kern-3pt(#1,#2)\kern-3pt\}^{\kern-0.5pt{\scriptscriptstyle\textup{tr}}}}
\newcommand \Tran {\textup{Tran}}
\newcommand \Equ {\textup{Equ}}
\newcommand \Quo {\textup{Quo}}
\newcommand \Rel {\textup{Rel}}
\newcommand \zero[1] {\Delta_{\kern - 1pt #1}}
\newcommand \quo[2]{\langle #1,#2 \rangle^{\kern-1pt{\scriptscriptstyle\textup{q}}} }
\newcommand \equ[2]{[#1,#2]^{\kern-0.5pt{\scriptscriptstyle\textup{e}}}}
\newcommand \llalpha{\alpha^{\text{\raisebox {1.5pt}{\kern -8pt$\mathord{\Leftarrow}$}}}}
\newcommand \rralpha{\alpha^{\text{\raisebox {1.5pt}{\kern -7pt$\mathord{\Rightarrow}$}}}}
\newcommand \llbeta{\beta^{\text{\raisebox {3.5pt}{\kern -8pt$\mathord{\Leftarrow}$}}}}
\newcommand \rrbeta{\beta^{\text{\raisebox {3.5pt}{\kern -7pt$\mathord{\Rightarrow}$}}}}
\newcommand \llgamma{\gamma^{\text{\raisebox {1.5pt}{\kern -7pt$\mathord{\Leftarrow}$}}}}
\newcommand \rrgamma{\gamma^{\text{\raisebox {1.5pt}{\kern -6pt$\mathord{\Rightarrow}$}}}}
\newcommand \lalpha{\alpha^{\kern -8pt\mathord{\leftarrow}}}
\newcommand \ralpha{\alpha^{\kern -7pt\mathord{\rightarrow}}}
\newcommand \ltau{\tau^{\kern -7pt\mathord{\leftarrow}}}
\newcommand \rtau{\tau^{\kern -6pt\mathord{\rightarrow}}}
\newcommand \lrho{\rho^{\kern -6pt\mathord{\leftarrow}}}
\newcommand \rrho{\rho^{\kern -5pt\mathord{\rightarrow}}}
\newcommand \nothing [1]{}
\newcommand\red[1]{{\textcolor{red}{#1}}}
\newcommand \tbf [1] {\textbf{#1}} 
\newcommand \set[1] {\{#1\}}
\renewcommand\epsilon{{\varepsilon}}
\newcommand \init [1]  {}
\newcommand \tuple [1] {\langle #1\rangle}
\newcommand \lpair [2] {(#1,#2)}
\begin{document}
\title[Small generating sets of $\Quo(A)$ and $\Tran(A)$]
{A concise approach to small generating sets of lattices of quasiorders and transitive relations}
\author[G.\ Cz\'edli]{G\'abor Cz\'edli}
\email{czedli@math.u-szeged.hu}
\urladdr{http://www.math.u-szeged.hu/\textasciitilde{}czedli/}
\address{University of Szeged, Bolyai Institute. 
Szeged, Aradi v\'ertan\'uk tere 1, HUNGARY 6720}
\author[J.\ Kulin]{J\'ulia Kulin}
\email{kulin@math.u-szeged.hu}
\urladdr{http://www.math.u-szeged.hu/\textasciitilde{}kulin/}
\address{University of Szeged, Bolyai Institute. 
Szeged, Aradi v\'ertan\'uk tere 1, HUNGARY 6720}

\thanks{This research was supported by NFSR of Hungary (OTKA), grant number K 115518}
\keywords{Quasiorder lattice, lattice of preorders, minimum-sized generating set, four-generated lattice, lattice of transitive relations}
\date{\red{\hfill  \datum}\\
\phantom{nk} 2000 \emph{Mathematics Subject Classification}. Primary 06B99}
\begin{abstract} By H.\ Strietz, 1975, and G.\ Cz\'edli, 1996, the complete lattice $\Equ(A)$ of all equivalences is four-generated, provided the size $|A|$ is an accessible cardinal. Results of I.\ Chajda and G.\ Cz\'edli, 1996, G.\ Tak\'ach, 1996, T.\ Dolgos, 2015, and J.\ Kulin, 2016, show that both the lattice $\Quo(A)$ of all quasiorders on $A$ and, for $|A|\leq \aleph_0$, the lattice $\Tran(A)$ of all transitive relations on $A$ have small generating sets. Based on complicated earlier constructions, we derive some new results in a concise but not self-contained way.
\end{abstract}

\maketitle

\section{Introduction}
\subsection*{Basic concepts}
Quasiorders, also known as preorders, on a set $A$ form a complete lattice $\Quo(A)$. So do the transitive relations on $A$; their complete lattice is denoted by $\Tran(A)$. Similarly, $\Equ(A)$ will stand for the lattice of all equivalences on $A$.
The natural involution, which maps a relation $\rho$ to its inverse, $\rho^\ast:=\rho^{-1}=\set{\lpair x y:\lpair y x\in\rho}$, is an automorhpism of each of the three lattices mentioned above. If, besides arbitrary joins and meets,  the involution is an operation of the structure, then we speak of the \emph{complete involution lattices}  $\Quo(A)$ and $\Tran(A)$.  However, it would not be worth  considering the involution on $\Equ(A)$, because it is the identity map. Unless otherwise stated, \emph{generation} is understood in complete sense. That is, for a subset $X$ of $\Equ(A)$, $\Quo(A)$, or $\Equ(A)$, we say that $X$ generates the complete (involution) lattice in question if the only complete sub lattice (closed with respect to involution) including $X$ is the whole lattice itself.
For $k\in\mathbb N:=\set{1,2,3,\dots}$, we say that a complete lattice $L$ is \emph{$k$-generated} if it can be generated by a $k$-element subset $X$; $k$-generated complete involution lattices are understood similarly.
Since the involution commutes with infinitary lattice terms, we obtain easily that
\begin{equation}
\parbox{8cm}{if a complete involution lattice $L$ is $k$-generated, then the complete lattice we obtain from $L$ by disregarding the involution is $2k$-generated.}
\label{eqtxtdkeGrB}
\end{equation}
Note that when dealing with \emph{finite} sets $A$ or finite lattices, then the adjective ``complete'' is superfluous; this trivial fact will not be repeated all the time later. 

If a complete lattice is generated by 
a four-element subset $X=\set{x_1,x_2,x_3,x_4}$ such that $x_1<x_2$ but both $\set{x_1,x_3,x_4}$ and 
$\set{x_2,x_3,x_4}$ are antichains, the we say that this lattice is \emph{$(1+1+2)$-generated}. 

We need also the concept of \emph{accessible cardinals}. A cardinal $\kappa$ is \emph{accessible} if it is finite, or it is infinite and for every $\lambda\leq \kappa$, 
\begin{itemize}
  \item either $\lambda \leq 2^\mu$ for some cardinal $\mu<\lambda$,
  \item or there is a set $I$ of cardinals such that $\lambda \leq \sum_{\mu \in I}\mu$,  $|I|<\lambda$, and $\mu<\lambda$  for all $\mu\in I$. 
\end{itemize}
Since all sets in this paper will be assumed to be of accessible cardinalities, two remarks are appropriate here. First,  ZFC has a model in which all cardinals are accessible; see Kuratowski~\cite{kuratowski}. Second, we do not have any idea how approach the problem if $|A|$ is an inaccessible cardinal. 

\subsection*{Earlier results}
Since a detailed historical survey has just been given in Cz\'edli~\cite{czgrecent}, here we mention only few known facts. 
By Strietz~\cite{strietz1} and \cite{strietz2}, Z\'adori~\cite{zadori}, and Cz\'edli~\cite{czedlifourgen}, the complete lattice $\Equ(A)$ of all equivalences is four-generated, provided the size $|A|$ of $A$ is an accessible cardinal and $|A|\geq 2$. Also, we know from these papers that $\Equ(A)$ cannot be generated by less than four elements if $|A|\geq 4$. 
We know from Chajda and Cz\'edli~\cite{chcz} and Tak\'ach~\cite{takach} that the complete \emph{involution} lattice $\Quo(A)$ is three-generated for $|A|\geq 2$ accessible, whereby we conclude from \eqref{eqtxtdkeGrB} that $\Quo(A)$ is six-generated as a complete lattice. Actually, we know from  Dolgos~\cite{dolgos} for $2\leq |A|\leq \aleph_0$ and from Kulin~\cite{kulin} for the rest of accessible cardinals that the complete lattice $\Quo(A)$ is five-generated.
Furthermore, it was proved in Cz\'edli~\cite{czgrecent} that the complete lattice $\Quo(A)$ is four-generated for 
$|A|=\set{\aleph_0}\cup (\mathbb N \setminus\set{1,4,6,8,10})$. It is also shown in \cite{czgrecent} that the complete lattice $\Quo(A)$ cannot be generated by less than four elements, provided $|A|\geq 3$.  Special variants of the above results, without considering the lattices $\Equ(A)$ and $\Quo(A)$ complete, were given in Cz\'edli~\cite{czedlismallgen} and \cite{czgrecent}.  Dolgos~\cite{dolgos} has recently shown that the complete lattice $\Tran(A)$ is eight-generated for $2\leq |A|\leq \aleph_0$. Finally, we know from Z\'adori~\cite{zadori}, which improves Strietz~\cite{strietz1,strietz2} by reducing 10 to 7, and from Cz\'edli~\cite{czedlioneonetwo} that the complete lattice $\Equ(A)$ is $(1+1+2)$-generated provided $|A|\geq 7$ and $|A|$ is an accessible cardinal.

 \subsection*{Concise versus self-contained}
Although the proofs given here are short, sometimes very short, these proofs rely on nontrivial earlier constructions. If someone wanted to replace the proofs of the theorems that are included in the rest of the present paper, then he would need to add several additional pages to each of these proofs; typically, about 15-20 pages to the proofs dealing with all accessible cardinals.

\section{Preparatory lemmas and notation}
As it is usual in lattice theory, we use 
``$\subset$'' to denote  proper inclusion, which excludes equality.

\begin{lemma}[{Kulin~\cite[page 61]{kulin}}]\label{lemmakulin}
If $3\leq |A|$ and $S$ is a complete sublattice of $\Quo(A)$ such that $\Equ(A)\subset S$, then $S=\Quo(A)$.
\end{lemma}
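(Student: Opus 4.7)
The plan is to show that $S$ contains every quasiorder of the form $\rho_{c,d} := \Delta_A \cup \set{(c,d)}$ with $c \neq d$, since then an arbitrary $\tau \in \Quo(A)$ can be recovered as the join $\bigvee\set{\rho_{c,d} : (c,d) \in \tau,\ c \neq d}$ taken in $\Quo(A)$; as $\tau$ itself is a reflexive transitive relation, this join is simply $\tau$, and its membership in $S$ follows at once from $S$ being a complete sublattice.

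To start, properness of $\Equ(A)\subset S$ gives some $\rho \in S\setminus\Equ(A)$. Since $\rho$ is reflexive but not symmetric, there exist $a\neq b$ in $A$ with $(a,b)\in\rho$ and $(b,a)\notin\rho$. Meeting $\rho$ with the equivalence $\Delta_A\cup\set{(a,b),(b,a)}\in\Equ(A)\subseteq S$ isolates the pair $(a,b)$ and puts $\rho_{a,b}$ into $S$.

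The main tool is a \emph{transport lemma}: whenever $\rho_{x,y}\in S$ and $z\in A\setminus\set{x,y}$, both $\rho_{z,y}$ and $\rho_{x,z}$ belong to $S$. For the first case I would compute the transitive closure $(\Delta_A\cup\set{(x,z),(z,x)})\vee\rho_{x,y}$, which a short chase shows is $\Delta_A\cup\set{(x,z),(z,x),(x,y),(z,y)}$, and then intersect with the equivalence $\Delta_A\cup\set{(z,y),(y,z)}$ to leave precisely $\rho_{z,y}$; the other case is entirely symmetric.

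I expect the principal obstacle to be the ``direction-reversal'' case, namely getting $\rho_{b,a}$ from $\rho_{a,b}$, because neither a join with an equivalence nor a meet with one can on its own swap the endpoints of a single arrow. This is exactly where the hypothesis $|A|\geq 3$ is used: picking some $e\in A\setminus\set{a,b}$, the three-step detour $\rho_{a,b}\to\rho_{a,e}\to\rho_{b,e}\to\rho_{b,a}$, with each arrow an instance of the transport lemma, does the job. For every other pair $(c,d)$ with $c\neq d$, at most two applications of the transport lemma suffice to reach $\rho_{c,d}$ from $\rho_{a,b}$, which together with the opening reduction completes the plan.
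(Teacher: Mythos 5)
Your argument is correct, and it is worth noting that the paper itself gives no proof of this lemma at all --- it is quoted from Kulin~\cite[page 61]{kulin} --- so what you have produced is a valid self-contained substitute rather than a parallel to anything in the text. The three ingredients all check out: the reduction to atoms is sound because any $\tau\in\Quo(A)$ equals the join (i.e.\ the transitive closure of the union, which is $\tau$ itself) of the atoms $\Delta_A\cup\set{(c,d)}$ with $(c,d)\in\tau$, $c\neq d$, while $\Delta_A\in\Equ(A)\subseteq S$ covers the degenerate case; the seed atom $\rho_{a,b}$ is obtained correctly, since $\rho\in\Quo(A)\setminus\Equ(A)$ must fail symmetry and intersecting with $\Delta_A\cup\set{(a,b),(b,a)}$ does isolate $(a,b)$; and the transport computation is right --- the transitive closure of $\Delta_A\cup\set{(x,z),(z,x),(x,y)}$ is exactly $\Delta_A\cup\set{(x,z),(z,x),(x,y),(z,y)}$, and meeting with $\Delta_A\cup\set{(z,y),(y,z)}$ leaves $\rho_{z,y}$. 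The only place where one has to be a little careful is that the transport lemma requires $z\notin\set{x,y}$ at each step, and your three-step detour $\rho_{a,b}\to\rho_{a,e}\to\rho_{b,e}\to\rho_{b,a}$ respects this at every stage (the replacements insert $e\notin\set{a,b}$, then $b\notin\set{a,e}$, then $a\notin\set{b,e}$), which is precisely where $|A|\geq 3$ enters, matching the hypothesis of the lemma. This is essentially the standard ``circle principle'' style of argument that the paper records separately as Lemma~\ref{circlemma}, so your route is very much in the spirit of how the surrounding lemmas are proved.
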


\begin{notation} For $a\neq b\in A$, let 
\begin{align*}
\equ a b&:=\set{\lpair a a,  \lpair a b,  \lpair b b, \lpair b a}\cr
\quo a b&:=\set{\lpair a a,  \lpair a b,  \lpair b b},\text{ and}\cr
\trn a b&:=\set{\lpair a b}; \end{align*}
they are the least equivalence, the least quasiorder, and the least transitive relation, respectively,  containing the pair $\lpair a b$. While $\trn a b$ is always an atom of $\Tran(A)$ and all atoms of $\Tran(A)$ are of this form, 
$\equ a b$ is an atom of $\Equ(A)$ iff $\quo a b$ is an atom of $\Quo (A)$ iff $a\neq b$, and all atoms of $\Equ(A)$ and $\Quo(A)$ are of this form. 
\end{notation}

\begin{definition}
By a \emph{Z\'adori configuration} of rank $n\in\mathbb N$, we mean an edge-colored graph $F_n=\set{a_0,a_1,\dots,a_n,b_0,\dots, b_{n-1}}$ with $\alpha$-colored \emph{horizontal edges} $\lpair{a_{i-1}}{a_{i}}$ and $\lpair{b_{j-1}}{b_{j}}$ for
$i\in\set{1,\dots,n}$ and $j\in\set{1,\dots, n-1}$, $\beta$-colored \emph{vertical edges} $\lpair{a_i}{b_i}$ for $i\in\set{0,\dots, n-1}$, and $\gamma$-colored \emph{slanted edges $($of slope $45^\circ)$} 
 $\lpair{a_{i-1}}{b_i}$ for $i\in\set{1,\dots, n}$; these edges are \emph{solid} edges in our figures.  For example, $F_6$ is given in Figure~\ref{fig1} but we have to disregard the dotted edges. 
We do not make a notational distinction between the graph and its vertex set, $F_n$. The colors $\alpha$, $\beta$, and $\gamma$ are also members of $\Equ(F_n)$; we let $\lpair a b\in\alpha$ if there is an $\alpha$-colored path from $a$ to $b$ in the graph, and we define the equivalences $\beta,\gamma\in\Equ(F_n)$ analogously.
\end{definition}

The following lemma is due to Z\'adori~\cite{zadori}. Note that it is implicit in \cite{zadori}, and it was used, implicitly, in Cz\'edli~\cite{czedlismallgen}, \cite{czedlifourgen}, \cite{czedlioneonetwo}, and  \cite{czgrecent}. The lattice operations join and meet are also denoted by $+$ and $\cdot$ (or concatenation), respectively.

\begin{lemma}[Z\'adori~\cite{zadori}]\label{lemmazadori}
If $n\in \mathbb N$ and $A$ is the base set of the Z\'adori configuration $F_n$, then $\Equ(A)$ is generated by 
$\set{\alpha,\beta,\gamma,\equ{a_0}{b_0}, \equ{a_n}{b_{n-1}} }$.
\end{lemma}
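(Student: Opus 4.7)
The plan is to show that the complete sublattice $S$ of $\Equ(A)$ generated by $\set{\alpha,\beta,\gamma,\equ{a_0}{b_0}, \equ{a_n}{b_{n-1}} }$ contains every atom $\equ{x}{y}$ of $\Equ(A)$. Since every equivalence on $A$ is the join of its atoms, this will yield $S=\Equ(A)$. So the whole task is to produce the atoms of $\Equ(A)$ inside $S$.

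I would proceed in three stages. In the first stage I would anchor two atoms on the horizontal boundary by elementary meet-join combinations. Explicitly, $\equ{a_0}{b_0}+\gamma$ merges $a_0$ into the $\gamma$-block that already contains $b_0$, so after meeting with $\alpha$ (whose blocks are the top row and bottom row), only the pair $\lpair{a_0}{a_1}$ survives inside a row; thus $\alpha\cdot(\equ{a_0}{b_0}+\gamma)=\equ{a_0}{a_1}$. Symmetrically, $\alpha\cdot(\equ{a_n}{b_{n-1}}+\beta)=\equ{a_{n-1}}{a_n}$.

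In the second stage I would propagate these boundary atoms along the ladder. Given one horizontal atom, say $\equ{a_i}{a_{i+1}}\in S$, I expect that joining with $\beta$ creates a four-element block $\set{a_i,a_{i+1},b_i,b_{i+1}}$ which, when intersected with $\gamma$, isolates the slanted atom $\equ{a_{i+1}}{b_i}$; then joining this slanted atom with $\beta$ and meeting again with $\alpha$ should deliver $\equ{a_i}{a_{i+1}}$ again while an interleaved step involving $\gamma$ should advance the index. Iterating such moves from the two boundary atoms and meeting with $\alpha$ or with $\beta\cdot\gamma=\zero{A}$-like auxiliary equivalences, I expect to collect every edge atom: all $\equ{a_i}{a_{i+1}}$, all $\equ{b_j}{b_{j+1}}$, all $\equ{a_i}{b_i}$, and all $\equ{a_{i-1}}{b_i}$.

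In the final stage I would produce the remaining, non-adjacent atoms $\equ{x}{y}$ of $\Equ(A)$. Once an edge atom at each edge of the graph $F_n$ is in $S$, an atom $\equ{x}{y}$ for an arbitrary pair is obtained as the meet of two carefully chosen equivalences already in $S$: a larger equivalence whose only non-trivial block contains both $x$ and $y$ (built as a join of edge atoms along a path), intersected with an equivalence from $S$ that separates all intermediate vertices from $\set{x,y}$. The hardest part of the argument is the propagation in Stage 2: my naïve attempts to shift an atom by one position using a single join–meet combination with $\beta$ or $\gamma$ kept collapsing back to the starting atom or to the all-relation, so the correct shift must interleave $\beta$ and $\gamma$ in a precise pattern. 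I expect the cleanest way to justify this step is by induction on $i$, with the inductive hypothesis carrying not just the atom $\equ{a_i}{a_{i+1}}$ but also its neighbouring slanted and vertical atoms, so that the next shift has enough ingredients to succeed.
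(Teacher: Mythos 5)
The paper itself offers no proof of this lemma: it is quoted from Z\'adori~\cite{zadori}, where it is only implicit, so your attempt can only be measured against Z\'adori's argument. Its skeleton is reproduced correctly in your proposal: collect all ``edge atoms'' of $F_n$ in the generated sublattice, then obtain an arbitrary atom $\equ{x}{y}$ by meeting the joins of edge atoms along two internally disjoint paths (this is exactly Lemma~\ref{circlemma}), and finally write every equivalence as the join of the atoms below it. Your Stage~1 identities are correct, and so is the first move of Stage~2: $\gamma\cdot(\equ{a_i}{a_{i+1}}+\beta)=\equ{a_{i+1}}{b_i}$ does produce the slanted atom.

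The genuine gap is the remainder of Stage~2, and you have in effect flagged it yourself. The step ``joining this slanted atom with $\beta$ and meeting again with $\alpha$ should deliver $\equ{a_i}{a_{i+1}}$'' is false as stated: the $(\equ{a_{i+1}}{b_i}+\beta)$-block of $a_i$ is $\set{a_i,b_i,a_{i+1},b_{i+1}}$, so $\alpha\cdot(\equ{a_{i+1}}{b_i}+\beta)=\equ{a_i}{a_{i+1}}+\equ{b_i}{b_{i+1}}$, which is not an atom, and no single join--meet combination with $\alpha$, $\beta$, $\gamma$ and the material gathered so far from the left end will split it. This is precisely why the lemma distinguishes \emph{two} atoms, one at each end of the ladder: the unwanted summand is eliminated by meeting with an equivalence manufactured from the other anchor $\equ{a_n}{b_{n-1}}$ (equivalently, by an application of the circle principle of Lemma~\ref{circlemma} to two disjoint paths that go around through both ends of the configuration). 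Your closing remark that the shift ``must interleave $\beta$ and $\gamma$ in a precise pattern,'' to be found by a strengthened induction, names the difficulty without resolving it; since this propagation is the entire nontrivial content of Z\'adori's lemma, the proof is not complete until that step is written down and verified.
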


The following straightforward lemma was also used, explicitly or implicitly, in several earlier papers; see 
Chajda and Cz\'edli~\cite[second display in page 423]{chcz}, 
Cz\'edli~\cite[circle principle in page 12]{czedlismallgen}, 
\cite[last display in page 55]{czedlifourgen}, 
\cite[first display in page 451]{czedlioneonetwo}, and \cite[Lemma 2.1]{czgrecent},   
Kulin~\cite[Lemma 2.2]{kulin}, Tak\'ach~\cite[page 90]{takach}, and Z\'adori~\cite[second display in page 583]{zadori}.

\begin{lemma}\label{circlemma} For an arbitrary set $A$ and $j,k\in \N$, if $\set{u,v}$, $\set{x_1,\dots, x_{j-1}}$ and $\set{y_1,\dots, y_{k-1}}$ are pairwise disjoint subsets of $A$,  $u=x_0=y_0$, and  $v=x_j=y_k$,  then 
\begin{align*}
\quo{u}{v}&= \Bigl(\sum_{i=1}^j \quo{x_{i-1}}{x_i}\Bigr) \cdot  \Bigl(\sum_{i=1}^k \quo{y_{i-1}}{y_i}\Bigr), \text{ and}\cr
\equ{u}{v}&= \Bigl(\sum_{i=1}^j \equ{x_{i-1}}{x_i}\Bigr) \cdot  \Bigl(\sum_{i=1}^k \equ{y_{i-1}}{y_i}\Bigr)    \text.
\end{align*}
\end{lemma}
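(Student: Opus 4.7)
The plan is to describe the two joins explicitly as subsets of $A\times A$ and then compute their meet, using the disjointness of the two paths away from their shared endpoints. Write $\sigma := \sum_{i=1}^j \quo{x_{i-1}}{x_i}$ and $\tau := \sum_{i=1}^k \quo{y_{i-1}}{y_i}$ for the two quasiorder joins on the right of the first displayed formula. First I would observe that, since $\sigma$ is the least quasiorder on $A$ containing each pair $(x_{i-1},x_i)$, reflexive--transitive closure yields $\sigma = \{(a,a):a\in A\}\cup\{(x_s,x_t):0\leq s\leq t\leq j\}$, and $\tau$ has the analogous description in terms of the $y_i$. In particular $(u,v)=(x_0,x_j)\in\sigma$ and $(u,v)=(y_0,y_k)\in\tau$, giving the easy inclusion $\quo{u}{v}\subseteq\sigma\cdot\tau$.

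For the reverse containment, I would take an arbitrary non-diagonal pair $(p,q)\in\sigma\cdot\tau$ and show $(p,q)=(u,v)$. From $(p,q)\in\sigma$ with $p\neq q$, the explicit form of $\sigma$ forces $p,q\in\{x_0,\dots,x_j\}$ with the $x$-index of $p$ strictly below that of $q$; symmetrically, $p,q\in\{y_0,\dots,y_k\}$. The pairwise disjointness of $\{u,v\}$, $\{x_1,\dots,x_{j-1}\}$, and $\{y_1,\dots,y_{k-1}\}$ shows that the only elements common to $\{x_0,\dots,x_j\}$ and $\{y_0,\dots,y_k\}$ are $u=x_0=y_0$ and $v=x_j=y_k$; the index constraint then pins down $p=u$ and $q=v$. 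This establishes $\sigma\cdot\tau\subseteq\quo{u}{v}$ and completes the first identity.

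The equivalence case is entirely parallel: $\sum_{i=1}^j \equ{x_{i-1}}{x_i}$ is the equivalence on $A$ whose only nontrivial block is $\{x_0,\dots,x_j\}$, and similarly for the $y$-sum. A non-diagonal element of the meet must therefore lie in $(\{x_0,\dots,x_j\}\cap\{y_0,\dots,y_k\})^2$, which by disjointness equals $\{u,v\}^2$; both such pairs $(u,v)$ and $(v,u)$ already belong to $\equ{u}{v}$. The only bookkeeping task---pinning down precisely which non-diagonal pairs appear in each join---is exactly what the pairwise disjointness hypothesis is designed to dispose of, so there is no real obstacle beyond this direct verification, which is why the lemma was called straightforward.
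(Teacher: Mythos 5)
Your argument is correct and is exactly the direct verification the paper has in mind: the paper states this lemma without proof, calling it straightforward and citing the ``circle principle'' from the earlier papers, and your explicit description of the two joins followed by the intersection computation (with the disjointness hypothesis forcing a non-diagonal common pair to be $(u,v)$, the index ordering ruling out $(v,u)$ in the quasiorder case) is the standard way to prove it. The only implicit points worth being aware of are that $u\neq v$ and the distinctness of the $x_i$ (resp.\ $y_i$) are needed for your explicit formula for $\sigma$ and for the index argument, but both are part of the intended reading of the hypotheses.
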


\begin{lemma}
\label{lemmatransa}
Assume that $\alpha_1,\dots,\alpha_k\in\Quo(A)$ are \emph{antisymmetric} $($in other words, they are orderings$)$ and $\{\alpha_1$, \dots, $\alpha_k\}$ generates the complete involution lattice $\Quo(A)$. Then $\set{\alpha_1\setminus\zero A,\dots,\alpha_k\setminus\zero A}$ is a generating set of the complete involution lattice $\Tran(A)$. 
The same holds if we consider $\Quo(A)$ and $\Tran(A)$ complete lattices $($without involution$)$.
\end{lemma}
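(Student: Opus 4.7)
Let $T$ denote the complete involution sublattice of $\Tran(A)$ generated by $\tilde\alpha_i:=\alpha_i\setminus\zero A$, $i=1,\dots,k$. I first note that antisymmetry of $\alpha_i$ forces $\tilde\alpha_i\in\Tran(A)$: if $\lpair x y,\lpair y z\in\tilde\alpha_i$ with $x\neq y$ and $y\neq z$, then $\lpair x z\in\alpha_i$ by transitivity, and $x=z$ would contradict the antisymmetry of $\alpha_i$. Hence $T\subseteq\Tran(A)$ is meaningful, and the goal is to prove $T=\Tran(A)$.

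The key technical step is an induction on complete-involution-lattice terms $t$ establishing
\[
t(\tilde\alpha_1,\dots,\tilde\alpha_k)\setminus\zero A \;=\; t(\alpha_1,\dots,\alpha_k)\setminus\zero A,
\]
where the left side is computed in $\Tran(A)$ and the right side in $\Quo(A)$. Intersections and involutions commute with the operation $(\cdot)\setminus\zero A$, so the content is the join case: a non-loop pair $\lpair x y$ lies in the transitive closure of a union of quasiorders iff it lies in the transitive closure of their irreflexive parts, since diagonal pairs never shorten a directed non-loop path. Because $\set{\alpha_i}$ generates $\Quo(A)$, every $\sigma\in\Quo(A)$ has the form $t(\alpha)$, and then $t(\tilde\alpha)\in T$ exhibits $\sigma\setminus\zero A$ as the off-diagonal part of a $T$-element. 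Using that any meet of generators $\tilde\alpha_j^{\pm 1}$ is disjoint from $\zero A$, such a meet can be intersected with $t(\tilde\alpha)$ to strip off the extraneous diagonal baggage, yielding the atom $\trn{a}{b}\in T$ for each $a\neq b$.

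The principal obstacle, which I expect to be the hardest part, is placing the diagonal atoms $\trn{a}{a}=\set{\lpair a a}$ into $T$: since every $\tilde\alpha_j$ is irreflexive, $\lpair a a$ can enter a $T$-element only through a cycle produced by a $\Tran$-join, and such cycles always drag in extra off-diagonal pairs. The plan is, for each $a$, to invoke the hypothesis that $\set{\alpha_i}$ generates $\Quo(A)$ to find some $b\neq a$ and quasiorders in the generated sublattice whose off-diagonal parts contain $\lpair a b$ and $\lpair b a$; their $\Tran$-join is then a $T$-element containing $\lpair a a$, and intersection with a judicious join of the off-diagonal atoms $\trn{x}{y}$ already in $T$ removes the excess. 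Once $T$ contains every atom of $\Tran(A)$ the proof is complete, since $\tau=\bigvee_{\lpair x y\in\tau}\trn{x}{y}$ in $\Tran(A)$ for every $\tau$ (the $\Tran$-join introduces no new pairs when $\tau$ is already transitive). The statement without involution is obtained by the same argument, replacing complete-involution-lattice terms by complete-lattice terms and using that $\set{\alpha_i}$ already generates $\Quo(A)$ in that sense.
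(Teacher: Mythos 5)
Your overall strategy is the paper's: an induction on complete involution lattice terms showing that deleting $\zero A$ commutes with term evaluation (with the shortest-path argument handling the join case), then recovery of the off-diagonal atoms, then the diagonal atoms $\trn c c$ as $(\trn c x \vee \trn x c)\wedge(\trn c y \vee \trn y c)$ for a third and fourth point, and finally every $\tau\in\Tran(A)$ as the join of its atoms. The preliminary observation that antisymmetry makes each $\alpha_i\setminus\zero A$ transitive, and the formulation of the induction inside $\Tran(A)$ and $\Quo(A)$ rather than uniformly inside $\Rel(A)$ as the paper does, are harmless variations.

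The gap is in your mechanism for extracting the atom $\trn a b$ itself. You correctly note that the induction only yields $t(\tilde\alpha)\setminus\zero A=\trn a b$, so $t(\tilde\alpha)$ may be $\trn a b$ together with some of the loops $(a,a)$, $(b,b)$ created by transitive-closure joins of irreflexive relations. Your proposed repair --- intersecting $t(\tilde\alpha)$ with a meet of generators $\tilde\alpha_j^{\pm 1}$ --- does not work: such a meet is indeed irreflexive, but there is no reason for it to contain the pair $(a,b)$. Nothing in the hypotheses makes $a$ and $b$ comparable in any single ordering $\alpha_j$; in the standard constructions the pair $(a,b)$ enters $t(\vec\alpha)=\quo a b$ only through joins along chains of intermediate elements, exactly as in Lemma~\ref{circlemma}, and a meet containing both $\tilde\alpha_j$ and $\tilde\alpha_j^{\ast}$ is even empty by antisymmetry. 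So the intersection you propose would in general delete $(a,b)$ rather than isolate it, and you are left with only some $\tau\in T$ satisfying $\trn a b\subseteq\tau\subseteq\quo a b$, from which meets, transitive-closure joins, and the involution do not obviously recover $\trn a b$. (For what it is worth, the paper's own text moves quickly here as well: it derives $\trn a b=\mn t(\mn{\vec\alpha})$ and immediately concludes $\trn a b\in L$ from $t(\mn{\vec\alpha})\in L$, which tacitly assumes $t(\mn{\vec\alpha})$ carries no diagonal pairs. You have identified the one genuinely delicate spot, but your patch does not close it; a correct argument must either choose the term $t$ so that $t(\tilde\alpha)$ is provably irreflexive, or show how to remove the diagonal contamination inside the generated sublattice.)
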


\begin{proof} Let $\Rel(A)$ stand for the complete involution lattice of all binary relations over $A$. The meet in this lattice is the usual intersection, the involution is the map $\rho\mapsto \rho^\ast:=\rho^{-1}$,  
but the join is defined in the following  way: for $\rho_i\in \Rel(A)$ and $(x,y)\in A^2$, we have $(x,y)\in\bigvee\set{\rho_i: i\in I}$ iff there is an $n\in\mathbb N$,  there exists a finite sequence $x=z_0, z_1,\dots, z_n=y$ of elements of $A,$ and there are $i_1,\dots,i_n\in I$ such that $(z_{j-1},z_j)\in \rho_{i_j}$ for all $j\in\set{1,\dots,n}$. Note that $\Tran(A)$ and $\Quo(A)$ are complete involution sublattices of $\Rel(A)$. For a relation $\rho$, denote $\rho\setminus\zero A$ by $\mn\rho$. Instead of $\tuple{\beta_1,\dots,\beta_k}\in \Rel(A)^k$ and 
$\tuple{\mn\beta_1,\dots,\mn\beta_k}$, we write $\vb$ and $\mb$, respectively. We need $k$-ary $|A|$-complete involution lattice terms, which are defined in the usual way by transfinite induction, see, for example, \cite{czginfterm}; these terms are built from at most $|A|$-ary joins and meets and the involution operation $^\ast$. 
%
For such a term $t$,  $\mn t(\vb)$ and $\mn t(\mb)$ will stand for $\mn {(t(\vb))}$ and $\mn {(t(\mb))}$. Then, for every $k$-ary  $|A|$-complete involution lattice term $t$, we have that 
\begin{equation} \text{for every }\vb\in \Rel(A)^k, \quad \mn t(\vb)=\mn t(\mb).
\label{eqfhTnVbWyX}
\end{equation}

If the rank of $t$ is 0, then $t$ is a variable and \eqref{eqfhTnVbWyX} holds obviously. If  \eqref{eqfhTnVbWyX} holds for a term $t$, then it also holds for $t^\ast$, because $^\ast$ is a lattice automorphism. 
Next, assume that $t=\bigwedge\set{t_i:i\in I}$ and \eqref{eqfhTnVbWyX} holds for all the $t_i$. Then  
\begin{align*}\mn t(\vb)&=t(\vb)\setminus\zero A=\bigl(\bigcap\set{t_i(\vb):i\in I}\bigr)\setminus\zero A= \bigcap\set{t_i(\vb)\setminus\zero A:i\in I} 
\cr
&=  \bigcap\set{\mn t_i(\vb): i\in I} 
= \bigcap\set{\mn t_i(\mb): i\in I} \cr
&= \bigcap\set{t_i(\mb)\setminus\zero A: i\in I} = \bigl(\bigcap\set{t_i(\mb): i\in I}\bigr) \setminus\zero A\cr
&=t(\mb) \setminus\zero A = \mn t(\mb),
\end{align*}
whereby \eqref{eqfhTnVbWyX} holds for $t$. 

Next, assume that $t=\bigvee\set{t_i:i\in I}$.  In order to show the validity of \eqref{eqfhTnVbWyX} for $t$, assume first that $(x,y)\in \mn t(\vb)$. Then $x\neq y$ and $(x,y)\in t(\vb)$. So there is \emph{shortest} a finite sequence $x=z_0, z_1,\dots, z_n=y$ of elements of $A$ and there are $i_1,\dots,i_n\in I$ such that $(z_{j-1},z_j)\in t_{i_j}(\vb)$ for all $j\in\set{1,\dots,n}$. 
Since $x\neq y$ and we use a shortest  sequence, $n\in\mathbb N$ is at least 1 and $z_{j-1}\neq z_j$ for $j\in\set{1,\dots,n}$. Thus, $(z_{j-1},z_j)\in \mn t_{i_j}(\vb)$, whereby the induction hypothesis gives that $(z_{j-1},z_j)\in \mn t_{i_j}(\mb) \subseteq t_{i_j}(\mb)$. Therefore, $(x,y)\in t_{i_1}(\mb)\vee\dots\vee t_{i_n}(\mb)\subseteq 
\bigvee\set{t_i(\mb):i\in I}=t(\mb)$. But $x\neq y$, whence $(x,y)\in\mn t(\mb)$. This proves that $\mn t(\vb)\subseteq \mn t(\mb)$.
Conversely, since the lattice operations and the involution are monotone, $t(\mb)\subseteq t(\vb)$. Subtracting $\zero A$, we obtain that $\mn t(\mb)\subseteq \mn t(\vb)$. This proves \eqref{eqfhTnVbWyX}.

Armed with \eqref{eqfhTnVbWyX}, let $a\neq b\in A$. Since $\set{\alpha_1,\dots,\alpha_k}$ generates the complete involution lattice $\Quo(A)$, there is a $k$-ary $|A|$-complete involution lattice term $t$ such that $\quo a b = t(\vec\alpha)$. Subtracting $\zero A$ from both sides, we obtain that $\trn a b=\quo a b\setminus\zero A=t(\vec\alpha)\setminus\zero A=\mn t(\vec\alpha)$. Thus, by \eqref{eqfhTnVbWyX}, $\trn a b=\mn t(\mn{\vec\alpha})$. This means that for all $a\neq b\in A$, the complete involution sublattice $L$ generated by $\mn{\vec\alpha}$ in $\Rel(A)$ contains  $\trn a b$. But $L$ is also what $\mn{\vec\alpha}$ generates in  $\Tran(A)$. Thus,  what we need to prove is that $L=\Tran(A)$. For $a\neq b$, 
$\trn a b\in L$. Based on this containment, for each $c\in A$, we can pick $x,y\in A$ such that $|\set{x,y,c}|=3$; then 
\begin{equation}
\trn c c =(\trn c x \vee \trn x c)\wedge(\trn c y \vee \trn y c)\in L.
\label{eqidprtrmnt}
\end{equation}
Finally, for an arbitrary $\rho\in \Tran (A)$, 
we obtain from $\rho=\bigvee\set{\trn a b: (a,b)\in\rho}$ that $\rho\in L$. Consequently, $L=\Tran(A)$ is generated by $\mn{\vec\alpha}$ as required.
\end{proof}

\section{The lattice on quasiorders}

The following lemmas will lead a theorem.

\begin{lemma} For a set $A$ such that $13\leq |A|<\aleph_0$ and $|A|$ is odd, 
$\Quo(A)$ is $(1+1+2)$-generated.
\end{lemma}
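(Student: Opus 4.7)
My plan is as follows. Since $|A|$ is odd with $13 \leq |A|$, I would first write $|A| = 2n+1$ with $n \geq 6$ and identify $A$ with the vertex set of the Z\'adori configuration $F_n$. Because $|A| \geq 7$ and $|A|$ is accessible, Cz\'edli's theorem in~\cite{czedlioneonetwo} supplies a $(1+1+2)$-generating subset $Z = \{\mu_1, \mu_2, \mu_3, \mu_4\}$ of $\Equ(A)$ with $\mu_1 < \mu_2$ and with $\{\mu_1, \mu_3, \mu_4\}$, $\{\mu_2, \mu_3, \mu_4\}$ antichains. The plan is to use $Z$ as the backbone of a $(1+1+2)$-generating subset of $\Quo(A)$.

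The key step is to weaken exactly one atom-summand inside $\mu_1$ to a one-way arrow. By inspecting the construction in \cite{czedlioneonetwo}, I would locate a pair $(a, b)$ such that $\equ{a}{b} \leq \mu_1$ and $\{a, b\}$ is a two-element block of the equivalence $\mu_1$, and then set $\widetilde\mu_1 := \mu_1 \setminus \{(b, a)\}$. Because $\{a, b\}$ is a full $\mu_1$-block, the removal of $(b, a)$ leaves a reflexive transitive relation, so $\widetilde\mu_1 \in \Quo(A) \setminus \Equ(A)$. Moreover $\widetilde\mu_1 < \mu_1 \leq \mu_2$ gives $\widetilde\mu_1 < \mu_2$; the antichain conditions transfer automatically, since $\widetilde\mu_1 \subseteq \mu_1$ gives $\mu_i \nleq \widetilde\mu_1$ from $\mu_i \nleq \mu_1$ ($i = 3, 4$), while $\widetilde\mu_1 \nleq \mu_i$ follows from $\mu_1 \nleq \mu_i$ via a witness pair kept in $\widetilde\mu_1$ (and if the only witness happens to be $(b, a)$, one instead uses $(a, b) \in \widetilde\mu_1$, which also fails to lie in the equivalence $\mu_i$ by symmetry). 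Hence $\widetilde Z := \{\widetilde\mu_1, \mu_2, \mu_3, \mu_4\}$ has the $(1+1+2)$ shape in $\Quo(A)$.

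Next, let $L := \langle \widetilde Z \rangle$ be the complete sublattice of $\Quo(A)$ generated by $\widetilde Z$. Since $\widetilde\mu_1 \in L$ is asymmetric, Lemma~\ref{lemmakulin} will give $L = \Quo(A)$ once $\Equ(A) \subseteq L$ is verified. Because $\mu_2, \mu_3, \mu_4 \in L$, it suffices to place $\mu_1$ into $L$, and since $\mu_1 = \widetilde\mu_1 \vee \quo{b}{a}$, it in turn suffices to place $\quo{b}{a}$ into $L$. For this I would invoke Lemma~\ref{circlemma}: choose two vertex-disjoint paths $b = z_0, z_1, \ldots, z_s = a$ and $b = w_0, w_1, \ldots, w_t = a$ in $F_n$, construct the atomic single-pair quasiorders $\quo{z_{i-1}}{z_i}$ and $\quo{w_{j-1}}{w_j}$ inside $L$ --- built from $\mu_2, \mu_3, \mu_4$ (which, by equivalence-lattice computations, recover the Z\'adori colors $\alpha, \beta, \gamma$ of $F_n$ along with the required equivalence-atoms $\equ{z_{i-1}}{z_i}$, $\equ{w_{j-1}}{w_j}$) together with $\widetilde\mu_1$ used to orient the atoms in the correct direction --- and then form the meet of the two path-joins, which by Lemma~\ref{circlemma} equals $\quo{b}{a}$.

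The hard part will be this last step: manufacturing the directed single-pair quasiorders $\quo{z_{i-1}}{z_i}$ in $L$ out of the undirected atoms $\equ{z_{i-1}}{z_i}$ using only the single asymmetric generator $\widetilde\mu_1$, without a circular appeal to $\mu_1$ or to $\quo{b}{a}$ itself. The hypothesis $|A| \geq 13$, i.e., $n \geq 6$, is precisely what affords enough room in $F_n$ both to choose the pair $(a, b)$ conveniently and to route two genuinely vertex-disjoint alternative paths from $b$ to $a$; the odd parity of $|A|$ matches $|A|$ with the vertex count $2n+1$ of $F_n$ exactly.
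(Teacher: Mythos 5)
Your proposal correctly sets the stage (identifying $A$ with the vertex set of $F_n$, $|A|=2n+1$, $n\geq 6$; aiming for $\Equ(A)\subset L$ so that Lemma~\ref{lemmakulin} applies), but it stops exactly at the point where a proof is required. Having replaced $\mu_1$ by $\widetilde\mu_1:=\mu_1\setminus\set{(b,a)}$, everything hinges on showing $\mu_1\in L$, i.e., on producing $\quo{b}{a}$ (or $\equ a b$) inside $L=\langle\widetilde\mu_1,\mu_2,\mu_3,\mu_4\rangle$. Your plan for this is to apply Lemma~\ref{circlemma} along two disjoint paths from $b$ to $a$, using $\widetilde\mu_1$ ``to orient the atoms in the correct direction.'' This is precisely where the argument breaks: $\widetilde\mu_1$ is asymmetric only on the single pair $\set{a,b}$ and contains $(a,b)$ but not $(b,a)$, while $\mu_2,\mu_3,\mu_4$ are equivalences; so the only directed atom you can extract by meeting with the generators is $\quo a b$ (e.g., as $\widetilde\mu_1\wedge\equ a b$, assuming you already had $\equ a b$), never $\quo{b}{a}$ or any directed atom $\quo{z_{i-1}}{z_i}$ along your auxiliary paths. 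Moreover, obtaining even the undirected atoms $\equ{z_{i-1}}{z_i}$ from $\set{\mu_2,\mu_3,\mu_4}$ alone is not justified: the generating property of \cite{czedlioneonetwo} needs all four generators, so you are dangerously close to a circular appeal to $\mu_1$. You flag this step yourself as ``the hard part''; as written, the proposal is a plan with its central difficulty unresolved, not a proof.

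For comparison, the paper avoids this difficulty entirely by not trying to recover an equivalence generator from a weakened copy of itself. It keeps the three Z\'adori colors $\alpha,\beta,\gamma\in\Equ(A)$ of $F_n$ as generators and adds a fourth element $\delta=\equ{a_0}{a_n}+\equ{b_0}{b_{n-1}}+\quo{a_2}{a_4}$, which satisfies $\delta<\alpha$ (giving the $(1+1+2)$ shape) and lies outside $\Equ(A)$ only because of the harmless oriented summand $\quo{a_2}{a_4}$. The two block computations $\equ{a_0}{b_0}=\beta(\gamma+\delta)$ and $\equ{a_n}{b_{n-1}}=\gamma(\beta+\delta)$ then supply exactly the two extra atoms needed to invoke Lemma~\ref{lemmazadori}, whence $\Equ(A)\subset L$ and Lemma~\ref{lemmakulin} finishes. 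In other words, the asymmetry of the fourth generator is used only to force the \emph{proper} inclusion $\Equ(A)\subset L$, never to ``orient'' anything --- which is the step your approach cannot carry out.
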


\begin{figure}[htb]
\centerline
{\includegraphics[scale=1.0]{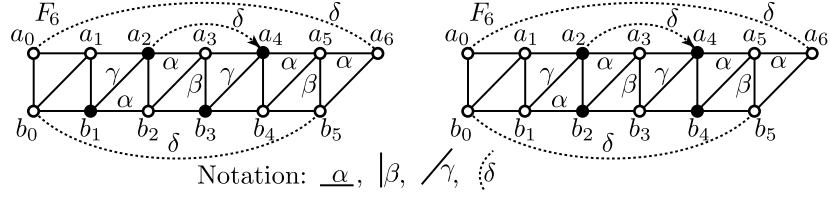}}
\caption{$F_6$ with dotted $\delta$-edges, twice}\label{fig1}
\end{figure}

\begin{proof} Take $F_n$ for $6\leq n\in\mathbb N$ from 
Lemma~\ref{lemmazadori}, see Figure~\ref{fig1}. Define  
\begin{equation}
\delta=\equ{a_0}{a_n}+\equ{b_0}{b_{n-1}}+\quo{a_2}{a_4}\in\Quo(A);
\label{eqdhfhBezv}
\end{equation}
the join above is denoted by plus and it is taken in $\Quo(A)$. Note that \eqref{eqdhfhBezv} makes sense since, say, $\equ{a_0}{a_n}\in\Equ(A)\subseteq\Quo(A)$. In the figure, $\delta$ is visualized by the dotted lines.  Let $L:=[\alpha,\dots,\delta]\leq \Quo(A)$. The $(\delta+\delta^{-1}+\gamma)$-block of $a_2$ 
is $\set{b_1,a_2,b_3,a_4}$, see the black-filled elements on the left, whereby it follows easily that $\equ{a_0}{b_0} = \beta(\gamma+\delta)$.
Similarly, the $(\delta+\delta^{-1}+\beta)$-block of $a_2$ consists of the 
 black-filled elements on the right, and we conclude that $\equ{a_n}{b_{n-1}}=\gamma(\beta+\delta)$. By Lemma~\ref{lemmazadori}, $\Equ(A)\subseteq L$. Actually, $\Equ(A)\subset L$, since $\delta\in L\setminus\Equ(A)$. Thus, the statement follows from Lemma~\ref{lemmakulin}.
\end{proof}

Let us agree that every infinite cardinal is even.

\begin{lemma}\label{lemmatvnylc}
For $58\leq |A|\leq \aleph_0$, if $|A|$ is even, then the complete lattice 
$\Quo(A)$ is $(1+1+2)$-generated. 
\end{lemma}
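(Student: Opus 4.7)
The plan is to imitate the strategy of the previous lemma, but on a configuration whose base set has even cardinality. The natural candidate is a disjoint union $F_{n_1}\sqcup F_{n_2}$ of two Z\'adori configurations, which has the even size $2(n_1+n_2)+2$; for $|A|=\aleph_0$ a countable disjoint union (or a single infinite Z\'adori-type configuration) would serve. The threshold $58$ presumably pins down the smallest split $n_1+n_2$ at which the combinatorics of the steps below can be made to close; a natural split like $n_1,n_2\geq 14$ would produce $|A|\geq 58$.

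On the chosen configuration I would define $\alpha,\beta,\gamma\in\Equ(A)$ by the Z\'adori colouring taken componentwise, and define $\delta\in\Quo(A)$ analogously to \eqref{eqdhfhBezv}, as a sum of atoms of the form $\equ{a_0}{a_n}$, $\equ{b_0}{b_{n-1}}$ and $\quo{a_2}{a_4}$ read in each component, together with a \emph{bridging} summand that links a handful of selected vertices across components. Since every summand of $\delta$ lies in $\alpha$, we automatically have $\delta\leq\alpha$, and the asymmetric piece $\quo{a_2}{a_4}$ ensures $\delta<\alpha$. Simultaneously $\alpha,\beta,\gamma$ remain pairwise incomparable, and a careful choice of the bridge keeps $\delta$ incomparable with each of $\beta,\gamma$; this supplies the required $(1+1+2)$-shape.

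Having fixed these generators, let $L:=[\alpha,\beta,\gamma,\delta]\leq\Quo(A)$. As in the odd case, I would inspect the $(\delta+\delta^{\ast}+\gamma)$-block and the $(\delta+\delta^{\ast}+\beta)$-block of suitable vertices in each component, so that the component-wise identities $\equ{a_0}{b_0}=\beta(\gamma+\delta)$ and $\equ{a_n}{b_{n-1}}=\gamma(\beta+\delta)$ survive; Lemma~\ref{lemmazadori} applied to each component then places all equivalences supported on a single component inside $L$. The bridging summand of $\delta$, combined with Lemma~\ref{circlemma}, produces the atoms $\equ{u}{v}$ for pairs $(u,v)$ that straddle the two components, so altogether $\Equ(A)\subseteq L$. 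Since $\delta\not\in\Equ(A)$, the containment is strict, and Lemma~\ref{lemmakulin} upgrades $\Equ(A)\subset L$ to $L=\Quo(A)$.

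The principal obstacle is the design of the inter-component bridge: it has to be small enough that $\delta\leq\alpha$ still holds and the three remaining antichain conditions on $\{\alpha,\beta,\gamma,\delta\}$ survive, yet rich enough that the mixed atoms across components can actually be generated via Lemma~\ref{circlemma}. The explicit lower bound $|A|\geq 58$ is almost surely the accounting constant forced by this balancing act, namely the smallest even cardinality at which both components have enough interior room to accommodate the $\quo{a_2}{a_4}$-type piece, the Z\'adori machinery, and the bridge simultaneously without their interacting destructively.
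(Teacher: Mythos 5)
Your overall skeleton (get $\Equ(A)\subseteq L$ with a quasiorder $\delta\notin\Equ(A)$ among the generators, then invoke Lemma~\ref{lemmakulin}) is the right one, but there is a genuine gap at exactly the point you flag as the ``principal obstacle'', and your proposed way around it is internally inconsistent. If the two Z\'adori configurations are disjoint and $\alpha,\beta,\gamma$ are taken componentwise, then the \emph{only} generator that can relate points of different components is $\delta$, via your bridging summand. But a cross-component pair can never lie in the componentwise $\alpha$, so the bridge destroys the inequality $\delta\leq\alpha$ that you yourself need for the $(1+1+2)$-shape (one comparable pair, the other two triples antichains). You cannot have both: either $\delta$ bridges the components and $\{\alpha,\delta\}$ ceases to be the comparable pair, or $\delta\leq\alpha$ and the mixed atoms $\equ u v$ with $u,v$ in different components are unreachable, so $L\neq\Quo(A)$. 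Since you explicitly leave the construction of the bridge open, the proof does not close.

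The paper's proof sidesteps this entirely by not using a disjoint union: it takes the glued sum $F_{13}\oplus F_t$ (and $F_{13}\oplus F_{14}\oplus F_{15}\oplus\cdots$ for $\aleph_0$) from Cz\'edli's \emph{Order} 1996 paper \cite{czedlismallgen}, in which consecutive configurations are already connected by $\alpha$-, $\beta$-, $\gamma$-colored edges (e.g.\ $\lpair{b^0_9}{a^1_{11}}$ is a $\gamma$-edge); that paper proves $\Equ(A)\subseteq[\alpha,\beta,\gamma,\delta]$ for its fourth generator $\delta$ with the required comparability built in. The only new ingredient here is to replace one unoriented $\delta$-edge by the oriented pair $\lpair{a^0_2}{a^0_4}$, observe that this does not disturb the argument of \cite{czedlismallgen} while forcing $\delta\notin\Equ(A)$, and then apply Lemma~\ref{lemmakulin}. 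The bound $58$ is likewise not a new accounting constant but simply $|F_{13}\oplus F_t|=2\cdot 13+1+2t+1$ for the smallest admissible $t$, inherited from that construction. So the missing idea in your write-up is precisely the one the lemma depends on: the inter-component connection must be carried by the equivalence generators of an existing connected construction, not by $\delta$.
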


\begin{figure}[htb]
\centerline
{\includegraphics[scale=1.0]{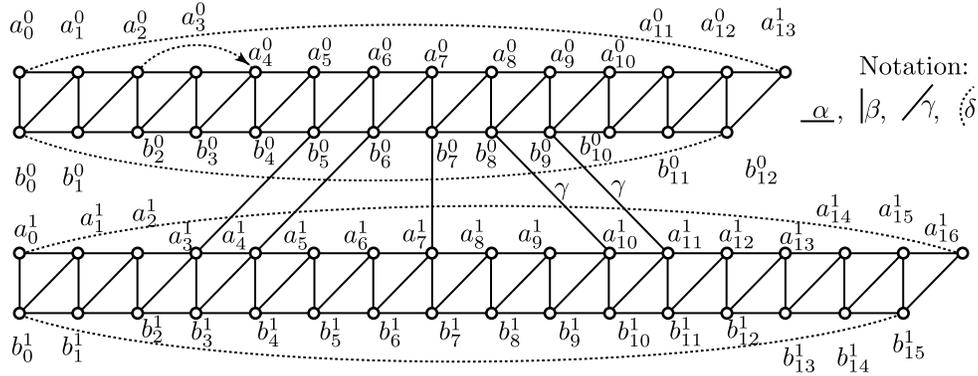}}
\caption{$F_{13}\oplus F_{13}$ }\label{fig2}
\end{figure}
%
%

\begin{proof}For $13<t\in\N$, define the graph $F_{13}\oplus F_t$ in the same way (but with a new notation) as 
in Cz\'edli~\cite{czedlismallgen}; see Figure~\ref{fig2} for $t=16$.  Note that, for example, $\lpair{b^0_9}{a^1_{11}}$ is a $\gamma$-colored edge, no matter how large $t$ is. Let $A:=F_{13}\oplus F_t$.
The dotted lines stand for $\delta$ again; note that because of $\lpair{a^0_2}{a^0_4}\in\delta$ but $\lpair{a^0_4}{a^0_2}\notin\delta$, $\delta\notin\Equ(A)$. Let  $L:=[\alpha,\dots,\delta]\leq \Quo(A)$.
Clearly, $|A|=2\cdot 13+1 + 2t+1$ ranges in $\set{58,60, 62,\dots}\subset \mathbb N$.  For $\aleph_0$, we let  $A:=F_{13}\oplus F_{14}\oplus F_{15}\oplus \dots$ as in \cite{czedlismallgen}.  Since the $\delta$-edge $(a^0_2,a^0_4)$ does not disturb anything in the proof given in \cite{czedlismallgen}, $\Equ(A)\subseteq L$. This inclusion, $\delta\in L\setminus \Equ(A)$, and Lemma~\ref{lemmakulin} yields the lemma. Also, by the argument of \cite{czedlismallgen},
\begin{equation}
\parbox{7.5cm}{the sublattice (not a complete one!) generated by $\set{\alpha,\beta,\gamma,\delta}$ contains all atoms of $\Quo(A)$.}\qedhere
\end{equation}
\label{eqtxtghBuEbG}
\end{proof}

Next, we formulate the ``large accessible'' counterpart of Lemma~\ref{lemmatvnylc}.

\begin{lemma} If $\,\aleph_0\leq |A|$ is accessible, then $\Quo(A)$ is $(1+1+2)$-generated.
\end{lemma}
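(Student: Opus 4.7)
The plan is to adapt the construction underlying Lemma~\ref{lemmatvnylc} to arbitrary accessible cardinals by replacing the countable ordered sum $F_{13}\oplus F_{14}\oplus F_{15}\oplus\cdots$ with an appropriate transfinitely-indexed direct sum $\bigoplus_{i\in I} F_{n_i}$ of Z\'adori configurations, where $I$ and the $n_i$ are chosen (using accessibility of $|A|$) so that the vertex set $A$ has the prescribed cardinality. Such indexing schemes are exactly what Kulin~\cite{kulin} and Cz\'edli~\cite{czgrecent} use for the uncountable cases, so I would take their generating triple $\set{\alpha,\beta,\gamma}\subseteq\Equ(A)$ of ``horizontal'', ``vertical'', and ``slanted'' equivalences off the shelf, together with their result that $\Equ(A)$ is generated by these three equivalences and the family of ``corner'' atoms $\equ{a^i_0}{b^i_0}$ and $\equ{a^i_{n_i}}{b^i_{n_i-1}}$ distributed across the summands.

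Next, I would define $\delta\in\Quo(A)$ by adding, in each summand $F_{n_i}$, the same ingredients as in \eqref{eqdhfhBezv}: the symmetric pieces $\equ{a^i_0}{a^i_{n_i}}+\equ{b^i_0}{b^i_{n_i-1}}$ together with one antisymmetric piece $\quo{a^i_2}{a^i_4}$, and take the join over all $i\in I$ in $\Quo(A)$. Since every pair contributing to $\delta$ already lies within some $\alpha$-block of its summand, $\delta\subseteq\alpha$; the antisymmetric edges $\lpair{a^i_2}{a^i_4}$ witness strictness and also that $\delta\notin\Equ(A)$. The comparability $\delta<\alpha$ supplies the required ``$1+1$'' pair, while incomparability of $\beta$ and $\gamma$ with each of $\alpha$ and $\delta$ is immediate from inspection of the respective blocks, giving both antichain conditions needed for $(1+1+2)$-generation.

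To show that $L:=[\alpha,\beta,\gamma,\delta]$ is all of $\Quo(A)$, I would recover the Z\'adori corner atoms summand by summand using exactly the block computation of the first lemma, i.e.\ $\equ{a^i_0}{b^i_0}=\beta(\gamma+\delta)$ and $\equ{a^i_{n_i}}{b^i_{n_i-1}}=\gamma(\beta+\delta)$ read locally inside each $F_{n_i}$. Combined with the generation argument of~\cite{kulin} and~\cite{czgrecent} that $\set{\alpha,\beta,\gamma}$ together with these corner atoms generates $\Equ(A)$, this yields $\Equ(A)\subseteq L$; the inclusion is strict because $\delta\notin\Equ(A)$, so Lemma~\ref{lemmakulin} forces $L=\Quo(A)$.

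The main obstacle will be the bookkeeping required to verify that the local computation $\beta(\gamma+\delta)$ really produces only the intended atom $\equ{a^i_0}{b^i_0}$ inside summand $F_{n_i}$ and is not polluted by long-range $\beta,\gamma$-paths that cross between summands or by $\delta$-contributions from other summands. This is exactly the kind of issue that the constructions of~\cite{kulin} and~\cite{czgrecent} were designed to control, so the argument should reduce to checking that their existing block analysis remains intact after inserting, inside each summand, the $\delta$-edges prescribed by~\eqref{eqdhfhBezv}; no new transfinite machinery beyond what~\cite{kulin} already supplies should be needed.
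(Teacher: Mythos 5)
There is a genuine gap, and it lies in the very first step: the ``off-the-shelf'' result you invoke does not exist for uncountable $A$. For a flat (transfinitely indexed) sum $\bigoplus_{i\in I}F_{n_i}$ of Z\'adori configurations there is no known theorem stating that $\Equ(A)$ is generated by $\set{\alpha,\beta,\gamma}$ together with the corner atoms of the summands, and this is not what Kulin~\cite{kulin} or Cz\'edli~\cite{czgrecent} do in the uncountable case. The gluing $\oplus$ of \cite{czedlismallgen} is inherently sequential: the corner atoms of each summand are recovered from those of the \emph{preceding} summand through the shared connecting edges, which is why that construction stops at $\aleph_0$ (it is exactly the content of Lemma~\ref{lemmatvnylc}). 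For uncountable accessible cardinals the constructions of \cite{czedlifourgen} and \cite{czedlioneonetwo} are not flat sums at all; they are built by transfinite recursion that exploits the two clauses in the definition of accessibility (the $\lambda\leq 2^\mu$ case and the $\lambda\leq\sum_{\mu\in I}\mu$ case) and, in the $(1+1+2)$ setting, a system of ``switch'' gadgets. Your plan therefore presupposes precisely the hard part of the problem, and the ``main obstacle'' you defer (controlling long-range $\beta,\gamma,\delta$-interaction between summands) is compounded by the more basic fact that the summands of an uncountable flat sum cannot all be reached from finitely many generators by the sequential propagation argument you are relying on.

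The paper takes a different and much shorter route: it starts from the $(1+1+2)$-generating quadruple of $\Equ(A)$ already built in \cite{czedlioneonetwo} for all accessible $|A|\geq\aleph_0$, enlarges $F_{29}$ to $F_{34}$ so as to have a spare, unused switch, and then replaces the non-oriented dotted arc of that spare switch by an oriented one. This single local change leaves the products $\beta(\gamma+\delta)$ and $\gamma(\beta+\delta)$ (and hence the whole generation argument for $\Equ(A)$) intact while forcing $\delta\notin\Equ(A)$, so that Lemma~\ref{lemmakulin} applies. If you want to salvage your approach, you would have to either reprove the $(1+1+2)$-generation of $\Equ(A)$ for your transfinite sum from scratch (the ``15--20 extra pages'' the paper alludes to), or do what the paper does and perturb an existing construction that already covers all accessible cardinals.
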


\begin{proof} Instead of $F_{29}$ in Cz\'edli~\cite[Figure 1]{czedlioneonetwo}, start with $F_{34}$. Instead of the four switches of $F_{29}$, designate five switches in $F_{34}$ but use only four of them exactly in the same way as in \cite{czedlioneonetwo}. Follow the construction of \cite{czedlioneonetwo} with $F_{34}$ instead of $F_{29}$ and, of course, not using the fifth switch. 
This change does not disturb the argument, and we obtain  a $(1+1+2)$-generating set of  the complete lattice $\Equ(A)$; the only difference is that  very many unused switches remains by the end of the construction. 

Now, we pick one of the unused switches and  turn it to the, say, upper half of  \cite[Figure 4]{czedlioneonetwo} but in a slightly modified form:  instead of the non-oriented dotted arc (for $\delta$), now we use an oriented arc. 
Since this arc changes neither $\beta(\gamma+\delta)$, nor   $\gamma(\beta+\delta)$,  $\delta\notin \Equ(A)$, we still have that $\Equ(A)\subseteq [\alpha,\dots,\delta]$. This fact, $\delta\notin\Equ(A)$ and Lemma~\ref{lemmakulin} complete the proof.
\end{proof}

The following lemma adds $6$, $8$, and $10$ to the scope of the main result of Cz\'edli \cite{czgrecent}; unfortunately, the case $|A|=4$ remains unsettled. Furthermore, it simplifies the approach of \cite{czgrecent} for finite sets $A$ with  $|A|$ even.

\begin{lemma}\label{htoLkd} For $6\leq |A|\in\N$ even, the $($complete$)$ lattice $\Quo(A)$ is four-generated.
\end{lemma}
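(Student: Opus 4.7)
The plan is to split on $|A|$ and reduce most cases to previously established results. For $|A|\geq 58$ even, Lemma~\ref{lemmatvnylc} already produces a $(1+1+2)$-generating set of the complete lattice $\Quo(A)$, which is in particular a four-element generating set, so the conclusion is immediate. What remains are the even cardinalities in $\set{6,8,10,\dots,56}$, and among these the values $|A|\in\set{6,8,10}$ are the only genuinely new ones (the others are treated in~\cite{czgrecent}); for those one may either recycle the construction of~\cite{czgrecent} or give a uniform argument that also simplifies~\cite{czgrecent} in the even case.

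For the remaining small cases, I would follow the template of the first lemma of this section. Take a Z\'adori configuration $F_n$ with $n$ chosen so that $F_n$ augmented by one auxiliary vertex $c$ has exactly $|A|$ vertices, i.e.\ $n=|A|/2-1$. Attach $c$ to the existing configuration by one $\alpha$-edge, one $\beta$-edge, and one $\gamma$-edge placed so that the underlying $F_n$ is not disturbed (for example, glue $c$ next to $a_n$ or $b_{n-1}$ so that the horizontal/slanted alignment extends naturally). Then Lemma~\ref{lemmazadori} applied to $F_n$, combined with Lemma~\ref{circlemma} applied to the short path linking $c$ to the rest, yields
$\Equ(A)\subseteq[\alpha,\beta,\gamma,\equ{a_0}{b_0},\equ{a_n}{b_{n-1}}]$.

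Next, define $\delta$ in the spirit of~\eqref{eqdhfhBezv}: take a join of equivalence atoms such as $\equ{a_0}{a_n}$ and $\equ{b_0}{b_{n-1}}$ together with one antisymmetric summand of the form $\quo{a_i}{a_{i+2}}$, with the index $i$ chosen so that the summand actually lives in $A$ (for $|A|=6$, where $n=2$, one must replace $\quo{a_2}{a_4}$ by, say, $\quo{a_0}{a_2}$ and correspondingly shift the verification). The identities $\equ{a_0}{b_0}=\beta(\gamma+\delta)$ and $\equ{a_n}{b_{n-1}}=\gamma(\beta+\delta)$ are then verified exactly as in the first lemma, by computing the relevant $(\delta+\delta^{-1}+\gamma)$- and $(\delta+\delta^{-1}+\beta)$-blocks of the appropriate vertices. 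This gives $\Equ(A)\subseteq[\alpha,\beta,\gamma,\delta]$, the inclusion is proper since $\delta\in\Quo(A)\setminus\Equ(A)$, and Lemma~\ref{lemmakulin} closes the argument.

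The main obstacle is the extreme tightness of the construction for $|A|\in\set{6,8,10}$: with $n\leq 4$, the Z\'adori configuration has only a handful of vertices, so the placement of the auxiliary vertex $c$ and the choice of the antisymmetric summand inside $\delta$ must be tuned by hand. One has to check that the extra $\delta$-edge, the extra vertex $c$, and its three attaching edges do not introduce unwanted identifications in the $(\delta+\delta^{-1}+\gamma)$- or $(\delta+\delta^{-1}+\beta)$-blocks, so that the two crucial equations recovering $\equ{a_0}{b_0}$ and $\equ{a_n}{b_{n-1}}$ still hold on the nose. In practice this forces an essentially case-by-case verification for the three smallest cardinalities, after which the general argument takes over.
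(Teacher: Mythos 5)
There is a genuine gap, and it sits exactly where the lemma has new content. As you yourself observe, even $|A|\geq 12$ is already covered by \cite{czgrecent} (and $|A|\geq 58$ by Lemma~\ref{lemmatvnylc}), so the substance of the lemma is the cases $|A|\in\set{6,8,10}$, i.e.\ $n\in\set{2,3,4}$. For precisely these cases your template is not carried out but only promised, and the one concrete choice you do make fails: for $|A|=6$ you propose to use $\quo{a_0}{a_2}$ as the antisymmetric summand of $\delta$, but $\quo{a_0}{a_2}\subseteq\equ{a_0}{a_2}$ is already absorbed by the summand $\equ{a_0}{a_2}$, so the resulting $\delta$ is an equivalence. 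Then the best you can hope for is $\Equ(A)\subseteq[\alpha,\beta,\gamma,\delta]$ with equality possible, whereas Lemma~\ref{lemmakulin} needs the \emph{proper} inclusion $\Equ(A)\subset S$; the closing step collapses. More generally, the block computations behind $\equ{a_0}{b_0}=\beta(\gamma+\delta)$ and $\equ{a_n}{b_{n-1}}=\gamma(\beta+\delta)$ in the first lemma of Section~3 use the vertices $b_1,a_2,b_3,a_4$ and genuinely need $n\geq 6$ of room; for $n\leq 4$ a new $\delta$ would have to be designed and verified from scratch, together with the effect of the extra vertex $c$ and its three attaching edges on those blocks. You explicitly defer this ``case-by-case verification,'' so the cases $6$, $8$, $10$ remain unproved.

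The paper sidesteps all of this by a different reduction: it takes the one-point extension $A=F_n\boxplus\set{x}$, so that $A'=A\setminus\set{x}$ has \emph{odd} cardinality $2n+1$, for which \cite{czgrecent} already provides four-generation; via $\Quo'(A)\cong\Quo(A')$ this puts $\Quo'(A)$ inside $L=[\alpha,\dots,\delta]$. The two equivalences $\equ{a_0}{x}=\beta(\equ{a_0}{a_n}+\gamma)$ and $\equ{a_n}{x}=\gamma(\equ{a_0}{a_n}+\beta)$ then feed Lemma~\ref{circlemma} to capture all of $\Equ(A)$, and Lemma~\ref{lemmakulin} finishes. This reduction of the even case to the odd case is uniform in $n\geq 2$ and is exactly what makes $6$, $8$, and $10$ tractable; if you want to salvage your direct approach, you would need to exhibit and verify explicit configurations and $\delta$'s for $n=2,3,4$ rather than appeal to the $n\geq 6$ picture.
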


\begin{figure}[htb]
\centerline
{\includegraphics[scale=1.0]{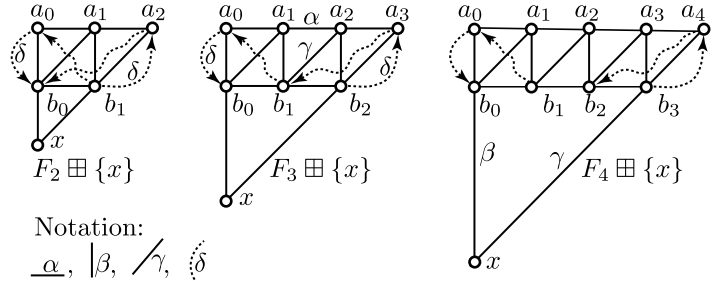}}
\caption{$F_n\boxplus\set{x}$  for $n\in\set{2,3,4}$}\label{fig3}
\end{figure}

\begin{proof} For $n\in\set{6,8,10,12,\dots}$, 
in accordance with our previous constructs and notation,
take the  \emph{one-point extension} $A:=F_n\boxplus\set{x}$ of $F_n$; see Figure~\ref{fig3} for $n\in\set{6,8,10}$. Let $L:=[\alpha,\dots,\delta]$. Also, 
let $A':=A\setminus\set x$, and let $\Quo'(A):=\set{\mu\in \Quo(A): \text{ the } \mu\text{-block of }x \text{ is } \set x }$. For $\epsilon\in\Quo(A)$, let $\epsilon':=\epsilon(\alpha+\delta)\in\Quo'(A)$. By Cz\'edli~\cite{czgrecent} and $\Quo'(A)\cong\Quo(A')$, $\Quo'(A)\subseteq L$. Clearly, we have that $\equ {a_0}x=\beta(\equ{a_0}{a_n}+\gamma)$ and $\equ {a_n}x=\gamma(\equ{a_0}{a_n}+\beta)$ belong to $L$. Hence, Lemma~\ref{circlemma} gives that $\Equ(A)\subseteq L$. Thus, Lemma~\ref{lemmakulin} applies.
\end{proof}

Now, the conclusion of this section is summarized in the following theorem.

\begin{theorem} Let $A$ be a non-singleton set with accessible cardinality.  Then the following statements hold.
\begin{itemize}
  \item If $|A|\neq 4$, then the complete lattice $\Quo(A)$ is four-generated.
 \item If $|A|\geq 13$ and either $|A|$ is an odd number, or $|A|\geq 58$ is even, then the complete lattice $\Quo(A)$ is $(1+1+2)$-generated. 
 \item If $13\leq |A|\leq \aleph_0$ and either $|A|$ is an odd number, or $|A|\geq 58$ is even, then lattice $\Quo(A)$ $($not a complete one now$)$ contains a $(1+1+2)$-generated sublattice that includes all atoms of $\Quo(A)$.
\end{itemize}
\end{theorem}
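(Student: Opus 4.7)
The theorem packages the preceding lemmas of this section together with the main result of Cz\'edli~\cite{czgrecent}, so the plan is essentially a case analysis on $|A|$; no new construction is needed.

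For the first bullet, I would partition the accessible cardinalities with $|A|\neq 4$ into three pieces. The range $|A|\in\set{6,8,10}$ is covered by Lemma~\ref{htoLkd}; the range $|A|\in\set{\aleph_0}\cup(\N\setminus\set{1,4,6,8,10})$ is exactly what~\cite{czgrecent} handles; and for accessible $|A|>\aleph_0$ the lemma of this section for large accessible cardinals yields $(1+1+2)$-generation, which a fortiori implies four-generation. These three pieces exhaust all cases required.

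The second bullet admits a cleaner partition: finite odd $|A|\geq 13$ is handled by the first lemma of this section; even $|A|$ with $58\leq |A|\leq \aleph_0$ is Lemma~\ref{lemmatvnylc} (recall the convention that every infinite cardinal counts as even); and accessible $|A|>\aleph_0$ is again the lemma for large accessible cardinals. For the third bullet, I would observe that for finite $|A|$ the complete lattice and the ordinary lattice coincide, so the claim reduces immediately to the second bullet. The only genuinely new assertion concerns $|A|=\aleph_0$, and this is exactly the content of the display~\eqref{eqtxtghBuEbG} in the proof of Lemma~\ref{lemmatvnylc}, which asserts that the non-complete sublattice generated by $\set{\alpha,\beta,\gamma,\delta}$ contains all atoms of $\Quo(A)$.

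Since every step is an invocation of an already-proved result, there is no substantial obstacle; the care required is bookkeeping the case analysis so that every accessible cardinality compatible with a given bullet's hypothesis falls into one of the ranges where the corresponding lemma applies. A secondary point worth making explicit in the write-up is that whenever the lemma for accessible $|A|\geq \aleph_0$ is invoked, the four generators it produces indeed satisfy the strict $(1+1+2)$-comparability pattern demanded by the definition.
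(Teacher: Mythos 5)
Your proposal is correct and matches the paper's intent exactly: the theorem is stated there as a summary of the section's lemmas together with the main result of Cz\'edli~\cite{czgrecent}, and your case analysis (Lemma~\ref{htoLkd} for $|A|\in\set{6,8,10}$, \cite{czgrecent} for the remaining finite and countable cases, the large-accessible lemma for $|A|>\aleph_0$, and \eqref{eqtxtghBuEbG} for the atom statement at $\aleph_0$) is precisely the intended assembly. Your closing remark about verifying the $(1+1+2)$ comparability pattern is a fair point of care, but it introduces nothing beyond what the cited lemmas already provide.
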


\section{The complete lattice of transitive relations}

\begin{lemma}\label{lemmakjonway} If $\,3\leq |A|$ and $|A|$ is an accessible cardinal, then the complete lattice $\Tran(A)$ is six-generated.
\end{lemma}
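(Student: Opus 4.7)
The plan is to chain together three ingredients already at our disposal: the three-generation theorem of Chajda--Cz\'edli~\cite{chcz} and Tak\'ach~\cite{takach} for the complete involution lattice $\Quo(A)$, our newly proved Lemma~\ref{lemmatransa}, and the doubling principle~\eqref{eqtxtdkeGrB}.

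First I would recall from \cite{chcz} and \cite{takach} that the complete involution lattice $\Quo(A)$ is three-generated whenever $|A|\geq 2$ is an accessible cardinal; call the generators $\alpha_1,\alpha_2,\alpha_3$. For the argument below I need these generators to be \emph{antisymmetric}, i.e., orderings. A look at the explicit constructions in \cite{chcz} and \cite{takach} shows that the quasiorders defined there are in fact orderings (none of them identifies two distinct elements with each other), so this is not an extra assumption but a property to be harvested from those papers. This verification is the only step that is not already mechanical, and it is therefore the main obstacle.

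With three antisymmetric generators of the complete involution lattice $\Quo(A)$ in hand, Lemma~\ref{lemmatransa} applies directly and produces the three-element set $\set{\alpha_1\setminus\zero A,\,\alpha_2\setminus\zero A,\,\alpha_3\setminus\zero A}$, which generates the complete involution lattice $\Tran(A)$. The standing assumption $|A|\geq 3$ is exactly what is needed to run the identity-pair trick in~\eqref{eqidprtrmnt} inside the proof of Lemma~\ref{lemmatransa}; without it, we could not put the elements $\trn c c$ into the generated sublattice.

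Finally, I would observe that the reasoning behind~\eqref{eqtxtdkeGrB} is purely formal: the involution commutes with all (complete) lattice operations, hence with every $|A|$-complete lattice term, so any $k$-generated complete involution lattice is $2k$-generated as a complete lattice (one simply adjoins the inverses of the $k$ generators). Applying this to the complete involution lattice $\Tran(A)$, which we have just shown to be three-generated, produces the required six-element generating set of $\Tran(A)$ as a complete lattice. Putting the three steps together yields the lemma.
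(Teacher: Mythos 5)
Your proposal is correct, but it is not the route the paper takes for this lemma. The paper proves Lemma~\ref{lemmakjonway} directly: it takes a four-element generating set $\set{\alpha_1,\dots,\alpha_4}$ of the complete lattice $\Equ(A)$ (Cz\'edli's four-generation theorem \cite{czedlifourgen}), adjoins a strict well-ordering $\rho$ and its inverse $\rho^{-1}$, and checks that the complete sublattice $L$ generated by these six elements contains every atom of $\Tran(A)$: for $a\neq b$ the atom $\trn a b$ equals $\equ a b\wedge\rho$ or $\equ a b\wedge\rho^{-1}$ according to which of $\rho,\rho^{-1}$ contains $(a,b)$, and the loops $\trn a a$ are obtained by the identity-pair trick \eqref{eqidprtrmnt}, which is where $|A|\geq 3$ enters. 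Your argument instead factors through the three-generation of the complete involution lattice $\Tran(A)$ --- this is exactly the paper's Lemma~\ref{lemmardhGT}, proved there by observing that Tak\'ach's generators are orderings and invoking Lemma~\ref{lemmatransa} --- and then doubles the generating set via \eqref{eqtxtdkeGrB}. The paper explicitly endorses your route in the remark after Lemma~\ref{lemmardhGT} (``\eqref{eqtxtdkeGrB} and Lemma~\ref{lemmardhGT} imply Lemma~\ref{lemmakjonway}''), but deliberately gives the direct proof as well because it avoids the heavier machinery of Lemma~\ref{lemmatransa}. What your approach buys is economy: a single external verification (that the generators in the cited construction are antisymmetric) yields both the involution-lattice and the plain-lattice statements at once. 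What it costs is that the entire argument hangs on that verification; note that the paper vouches for antisymmetry only for Tak\'ach's generators in \cite{takach}, so you should not assert the same for the construction of \cite{chcz} without actually checking it --- one antisymmetric triple suffices, and Tak\'ach's is the one to cite.
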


\begin{proof} By Cz\'edli~\cite{czedlifourgen}, there are $\alpha_1,\dots,\alpha_4\in \Equ(A)$ such that
$\set{\alpha_1,\dots,\alpha_4}$. Let $\rho$ be a strict linear order on $A$; for example, it can be a well-ordering. In order to see that 
the complete sublattice $L:=[\alpha_1,\dots,\alpha_4,\rho,\rho^{-1}]$ is actually $\Tran(A)$; it suffices to show that $L$ contains all the atoms of $\Tran(A)$. Take an atom; it is of the form $\trn a b$.  First, assume that $a\neq b$. Then either $\rho$, or $\rho^{-1}$ contains the pair $(a,b)$. Hence, 
$\trn a b$ is either $\equ a b\wedge\rho$, or $\equ a b\wedge\rho^{-1}$. In both cases, since $\equ a b\in \Equ(A)=[\alpha_1,\dots,\alpha_4]\subseteq L$, we obtain that $\trn a b\in L$. Second, assume that $a=b$; that is, we need to deal with $\trn a a$. The assumption $3\leq |A|$ allows us to pick $x,y\in A$ such that $|\set{a,x,y}|=3$. Using \eqref{eqidprtrmnt} with $a$ in place of $c$, we obtain that $\trn a a\in L$, as required.
\end{proof}

\begin{lemma}\label{lemmardhGT} If $\,3\leq |A|$ and $|A|$ is and accessible cardinal, then the complete involution lattice $\Tran(A)$ is  three-generated.
\end{lemma}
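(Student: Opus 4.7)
The plan is to reduce the statement to Lemma~\ref{lemmatransa} by producing three \emph{antisymmetric} quasiorders that generate $\Quo(A)$ as a complete involution lattice. By Chajda and Cz\'edli~\cite{chcz} and Tak\'ach~\cite{takach}, $\Quo(A)$ is three-generated as a complete involution lattice, but the generators produced there need not be orderings. The first task is therefore to revisit those constructions and check that, with at most a minor modification, the three generators can be taken to be antisymmetric. Once that is in hand, Lemma~\ref{lemmatransa} is exactly the tool that converts such a generating set of $\Quo(A)$ into a three-element generating set of $\Tran(A)$ of the same size, by replacing each $\alpha_i$ with $\alpha_i\setminus\zero A$.

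In more detail, I would proceed as follows. First, I would recall the Chajda--Cz\'edli--Tak\'ach generators $\alpha_1,\alpha_2,\alpha_3\in\Quo(A)$ of the complete involution lattice $\Quo(A)$, and inspect the construction to see that each $\alpha_i$ either already is, or can be replaced by, an antisymmetric quasiorder without changing the complete involution sublattice they generate. The typical construction produces quasiorders whose ``symmetric parts'' $\alpha_i\cap\alpha_i^{-1}$ are trivial (i.e. equal to $\zero A$) except on very small blocks, so an easy surgery (deleting those few extra pairs) preserves both the generating property and antisymmetry, because $\alpha_i^{-1}$ is also available via the involution $^\ast$. Second, having three antisymmetric generators $\alpha_1,\alpha_2,\alpha_3$ of the complete involution lattice $\Quo(A)$ in hand, Lemma~\ref{lemmatransa} immediately gives that $\set{\alpha_1\setminus\zero A,\alpha_2\setminus\zero A,\alpha_3\setminus\zero A}$ generates the complete involution lattice $\Tran(A)$, which is what is to be proved.

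The main obstacle is the first step: verifying (or adapting) the earlier constructions to ensure antisymmetric generators. This is not automatic, because nothing in \cite{chcz,takach} is phrased in terms of orderings; however, since antisymmetry is a ``local'' condition and the constructions there are explicit, this should be a routine but slightly technical check. Once antisymmetry is secured, the rest of the argument is immediate from Lemma~\ref{lemmatransa}, and one does not even need the assumption $3\leq|A|$ beyond what is already required to apply that lemma (the special atoms $\trn c c$ are handled there via \eqref{eqidprtrmnt}, which in turn needs three distinct elements in $A$).
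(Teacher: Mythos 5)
Your proposal follows essentially the same route as the paper: reduce the statement to Lemma~\ref{lemmatransa} by exhibiting three antisymmetric generators of the complete involution lattice $\Quo(A)$. The paper's proof is even shorter, since it simply observes that the three generators constructed in Tak\'ach~\cite{takach} are already orderings, so the ``surgery'' step you anticipate turns out to be unnecessary.
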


\begin{proof}
Observe that the three generators constructed in Tak\'ach~\cite{takach} are orderings. Thus, Lemma~\ref{lemmatransa} applies.
\end{proof}

Note that this proof is more complicated than the proof of Lemma~\ref{lemmakjonway}, because this proof uses Lemma~\ref{lemmatransa}. Note also that \eqref{eqtxtdkeGrB} and Lemma~\ref{lemmardhGT}  imply Lemma~\ref{lemmakjonway}.
Now, based on Lemmas~\ref{lemmakjonway} and \ref{lemmardhGT}, we are in the position to conclude this section and the paper with the following theorem.

\begin{theorem}
If $A$ is a set such that $\,3\leq |A|$ and $|A|$ is an accessible cardinal,
then $\Tran(A)$ is six-generated as a complete lattice, and it is three-generated as a complete involution lattice.
\end{theorem}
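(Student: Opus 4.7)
The plan is very short because the theorem is essentially a restatement bundling together Lemmas~\ref{lemmakjonway} and \ref{lemmardhGT}, both of which have already been proved in the text immediately preceding the theorem. So the proof I would write consists of little more than citing these two lemmas and pointing out that together they give exactly the two assertions of the theorem.

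Concretely, I would start by noting that the first bullet (six-generated as a complete lattice) is verbatim the content of Lemma~\ref{lemmakjonway}, and the second bullet (three-generated as a complete involution lattice) is verbatim the content of Lemma~\ref{lemmardhGT}. Since both lemmas hold under the hypothesis $3\leq |A|$ with $|A|$ accessible, which is precisely the hypothesis of the theorem, nothing further is needed.

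If I wanted to make the proof a shade less trivial, I would add a brief remark acknowledging the internal redundancy already pointed out in the text: by~\eqref{eqtxtdkeGrB}, three-generation as a complete involution lattice automatically yields six-generation as a complete lattice, so Lemma~\ref{lemmardhGT} alone in fact implies Lemma~\ref{lemmakjonway}, and hence the first bullet of the theorem follows from the second. I would, however, keep the reference to Lemma~\ref{lemmakjonway} because it rests on the more elementary construction of Cz\'edli~\cite{czedlifourgen} together with a linear ordering and its inverse, whereas Lemma~\ref{lemmardhGT} depends on the harder three-element generating set of Tak\'ach~\cite{takach} combined with Lemma~\ref{lemmatransa}.

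There is no real obstacle to anticipate: the substantive work — the existence of orderings among the generators in \cite{takach} allowing Lemma~\ref{lemmatransa} to be applied, and the verification that an atom $\trn a b$ lies in the sublattice $L$ via $\equ a b\wedge\rho$ or $\equ a b\wedge\rho^{-1}$ together with identity~\eqref{eqidprtrmnt} for $\trn a a$ — has already been done in the proofs of the two lemmas. The proof of the theorem itself is therefore a one-line citation.
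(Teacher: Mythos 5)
Your proposal is correct and matches the paper exactly: the paper states the theorem as an immediate consequence of Lemmas~\ref{lemmakjonway} and \ref{lemmardhGT}, with no further argument, and it even makes the same observation you do that \eqref{eqtxtdkeGrB} together with Lemma~\ref{lemmardhGT} already implies Lemma~\ref{lemmakjonway}.
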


%
%

\end{document}